\numberwithin{equation}{section}
\newtheorem{theorem}{Theorem}[section]
\newtheorem{corollary}[theorem]{Corollary}
\newtheorem{lemma}[theorem]{Lemma}
\newtheorem{proposition}[theorem]{Proposition}
\theoremstyle{definition}
\title{Lower bounds in the polynomial Szemer\'edi theorem}
\author{Khalid Younis}
\address{Khalid Younis, South Yorkshire, United Kingdom}
\email{younis.maths@outlook.com}
\begin{document}
\begin{abstract}
We construct large subsets of the first $N$ positive integers which avoid certain arithmetic configurations. In particular, we construct a set of order $N^{0.7685}$ lacking the configuration $\{x,x+y,x+y^2\},$ surpassing the  $N^{3/4}$ limit of Ruzsa's construction for sets lacking a square difference. We also extend Ruzsa's construction to sets lacking polynomial differences for a wide class of univariate polynomials. Finally, we turn to multivariate differences, constructing a set of order $N^{1/2}$ lacking a difference equal to a sum of two squares. This is in contrast to the analogous problem of sets lacking a difference equal to a prime minus one, where the current record is of order $N^{o(1)}.$
\end{abstract}

\maketitle

\section{Introduction}
The Furstenberg-Sárközy theorem \cite{Furst, Sark}  states that if a subset \(A\subseteq[N]:=\{1,2,\dots,N\}\) has \emph{difference set}  \(A-A:=\{a-a':a,a'\in A\}\) disjoint from the non-zero squares, then \(\vert A \vert =o(N)\).  This result has been improved and generalised in many directions. The current record quantitative bound is due to Pintz,  Steiger, and Szemerédi \cite{Pintz88}.  Replacing squares by a univariate polynomial with integer coefficients, a general criteria for such a result was determined by Kamae and Mend\`es France \cite{KamMend}, with bounds analogous to \cite{Pintz88} due to Rice \cite{Rice19}. Rice \cite{Rice18} has also considered difference sets avoiding binary forms. In all of these works there is little recorded regarding corresponding lower bounds, apart from the case of pure-power differences considered by Ruzsa \cite{Ruzsa84}.

The polynomial Szemer\'edi theorem \cite{BergLei} is a deep generalisation of these results, albeit qualitative in nature.  There has been much recent work on quantitative upper bounds in this theorem, with Green \cite{Green} considering sets lacking three-term progressions with common difference equal to a sum of two squares, Prendiville \cite{Prend17} tackling arbitrarily long progressions with difference equal to a perfect power, and with Peluse and Prendiville \cite{PelPrend19} dealing with the non-linear Roth configuration \begin{equation}\label{nonlinear roth}
\{x,x+y,x+y^2\}.
\end{equation}  
For the former two configurations, the Behrend construction \cite{Behr} provides a lower bound of super-polynomial size. However, this is not applicable to the non-linear Roth configuration \eqref{nonlinear roth}, and it has been speculated that the correct order of magnitude for this problem may be closer to  that of the Furstenberg--S\'ark\"ozy problem.

The goal of this paper is threefold: to construct a set lacking the non-linear Roth configuration \eqref{nonlinear roth} which is larger than the current record lower bound in the Furstenberg--S\'ark\"ozy theorem; to extend Ruzsa's construction \cite{Ruzsa84} beyond the case of perfect power differences; and to obtain the first polynomial lower bounds for sets lacking certain multivariate differences. Here then are three representative results.
 \begin{theorem}\label{nonlincor}
For all \(\varepsilon>0\) and all positive integers \(N,\) there exists a set \(A \subseteq [N]\) with no non-trivial\footnote{Non-trivial meaning \(y\in \mathbb{Z}\setminus \{0\}.\)} configuration of the form \(\{x,x+y,x+y^2\}\) and \(\vert A \vert \gg_{\varepsilon} N^{\gamma - \varepsilon},\) where
 \[\gamma = \frac{1}{2}+\frac{\log_{65}7}{3}+\frac{\log_{65}{17}}{6} =0.7685\dots.\]
 \end{theorem}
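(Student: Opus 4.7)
My approach is to adapt Ruzsa's base-$M$ digit construction to the nonlinear Roth configuration, exploiting the fact that the forbidden pattern $\{x,x+y,x+y^2\}$ only rules out squares of differences already realised in $A-A$, rather than all squares (as in Furstenberg--S\'ark\"ozy). Set $M=65$, write $F=\{0,1,\ldots,64\}$, and choose sets $B,C\subseteq F$ with $|B|=7$ and $|C|=17$. For $n\geq 1$, put $N=M^n$ and define
\[
A:=\Big\{\sum_{i=0}^{n-1} b_i M^i : b_i\in\Sigma_i\Big\},
\]
where the alphabet $\Sigma_i$ is: $F$ if $i$ is odd, $B$ if $v_2(i)$ is a positive odd integer, and $C$ if $v_2(i)$ is a positive even integer (the single position $i=0$ is assigned to $B$). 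Since positions with $v_2(i)=j$ have asymptotic density $2^{-j-1}$ in $\{0,\ldots,n-1\}$, summing geometric series gives
\[
\tfrac{1}{n}\log_M|A|\;\sim\;\tfrac{1}{2}+\tfrac{\log_M 7}{3}+\tfrac{\log_M 17}{6}=\gamma,
\]
so $|A|\gg_\varepsilon N^{\gamma-\varepsilon}$.

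To forbid a nontrivial configuration $\{a_1,a_1+y,a_1+y^2\}\subseteq A$, I run a lowest-digit analysis. Let $k$ be the least index at which $y=a_2-a_1$ has a nonzero signed digit $y_k=a_{2,k}-a_{1,k}\in\Sigma_k-\Sigma_k$. Then $y^2$ has no nonzero coefficient below position $2k$, and at position $2k$ the coefficient is exactly $y_k^2$ (no carry enters from below). Reducing $a_3-a_1=y^2$ modulo $M^{2k+1}$ and observing that $\sum_{i<2k}(a_{3,i}-a_{1,i})M^i$ has absolute value strictly below $M^{2k}$, this lower sum must vanish identically, forcing $a_{3,i}=a_{1,i}$ for $i<2k$ together with
\[
y_k^2\;\equiv\;a_{3,2k}-a_{1,2k}\pmod{M}.
\]
Since $v_2(2k)=v_2(k)+1$, it suffices to arrange that the square of the difference set of the alphabet at valuation $j$ meets the difference set of the alphabet at valuation $j+1$ only at $0$ modulo $M$, for every $j\geq 0$. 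This forces $y_k^2\equiv 0\pmod{65}$, whence $y_k=0$ (since $|y_k|\leq 64<65$), contradicting the minimality of $k$.

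Unwinding these coupled constraints yields three concrete requirements on $B$ and $C$: (i) $B-B$ avoids the nonzero quadratic residues mod $65$ (the $j=0$ case), which is precisely Ruzsa's original condition and is achievable with $|B|=7$; (ii) $C-C$ avoids $(B-B)^2\bmod 65$ (odd $j\geq 1$), the principal new ingredient; and (iii) $B-B$ avoids $(C-C)^2\bmod 65$ (even $j\geq 2$). Condition (iii) is automatic, since $(C-C)^2\bmod 65$ lies entirely among the quadratic residues and these are already avoided by $B-B$ via (i). The entire argument therefore reduces to exhibiting a $17$-element set $C$ whose difference set is disjoint from the (at most $\sim 20$-element) target set $(B-B)^2\bmod 65$.

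The main obstacle is precisely this existence statement for $C$: there is no a priori guarantee that a $17$-element set with the required avoidance property exists for any Ruzsa-optimal $B$. In practice I would fix an explicit Ruzsa set $B\subset F$ with $|B|=7$ and $B-B$ disjoint from the nonzero quadratic residues mod $65$, compute the small set $S=(B-B)^2\bmod 65$ explicitly, and then search (by hand or machine) for $C\subseteq F$ with $|C|=17$ and $(C-C)\cap S=\{0\}\pmod{65}$. The plausibility of such $C$ rests on the fact that $|S|$ is considerably smaller than the full QR set, leaving room well beyond the Ruzsa cap of $7$. A secondary, minor bookkeeping issue is to verify the lowest-digit argument at the boundary position $i=0$; assigning $\Sigma_0=B$ reduces this case to condition (i) without disturbing the density calculation.
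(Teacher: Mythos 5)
Your construction is the same as the paper's, specialized to $k=2$, $m=65$: the paper proves a general Theorem \ref{config} for $\{x,x+y,x+y^k\}$ using a digit construction in base $m$ where the alphabet at position $i$ depends on the largest power of $k$ dividing $i$, subject to the avoidance condition $(R_{n+1}-R_{n+1})\cap (R_n-R_n)^k \subseteq \{0\} \pmod m$; it then deduces Theorem \ref{nonlincor} by taking the periodic sequence $(R_0,R_1,R_2,R_1,R_2,\dots)$ with $R_0$ the full residue system, exactly your assignment $F,B,C$ by $2$-adic valuation. Your lowest-digit analysis, the reduction to conditions (i)--(iii), the exponent calculation, and the handling of $i=0$ all match. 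Your observation that condition (iii) is automatic from (i) (since $(C-C)^2$ sits inside the quadratic residues, which $B-B$ already avoids) is a small clarification the paper leaves implicit.

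However, there is a genuine gap, and you identify it yourself: the argument reduces to the existence of a $7$-element Ruzsa set $B\subseteq\mathbb{Z}/65\mathbb{Z}$ and a $17$-element set $C$ with $(C-C)\cap(B-B)^2\subseteq\{0\}\pmod{65}$, but you do not exhibit such sets and explicitly concede that ``there is no a priori guarantee'' they exist. The theorem is an existence claim for $A$, which bottoms out at this finite combinatorial statement, so until it is verified the proof is incomplete. The paper closes this gap by listing the explicit witnesses
$R_1=\{31,39,8,62,19,42,50\}$ and
$R_2=\{31,47,62,34,42,39,27,8,54,23,0,58,19,50,15,12,4\}$,
found by a maximum-clique search, and asserting that they satisfy the required avoidance conditions modulo $65$. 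Your proposal correctly outlines the search one would run (fix a Ruzsa-optimal $B$, compute $S=(B-B)^2 \bmod 65$, search for $C$ with $(C-C)\cap S=\{0\}$), but carrying it out is not optional: without a concrete $B$ and $C$, the stated value of $\gamma$ is unsubstantiated.
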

 
\begin{theorem}\label{expoly}
For all positive integers \(N\), there exists a set \(A\subseteq[N]\) with all non-zero differences avoiding the set \(\{x^2+5x^3:x\in \mathbb{Z}\}\) and \(\vert A \vert \gg N^{\gamma},\) where
\[\gamma = \frac{2+\log_52}{3}=0.8102 \dots.\]
\end{theorem}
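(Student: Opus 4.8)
The plan is to refine Ruzsa's base-$B$ digit construction. Write $P(x)=x^{2}+5x^{3}=x^{2}(1+5x)$ and record the two features of $P$ on which everything hinges: since $1+5x\equiv 1\pmod 5$ the factor $1+5x$ is a $5$-adic unit, so $v_{5}(P(x))=2v_{5}(x)$ is always \emph{even}, and the $5$-free part $P(x)/5^{v_{5}(P(x))}$ reduces mod $5$ to a nonzero square; and $P$ is strictly increasing on each of $(-\infty,-\tfrac{2}{15}]$ and $[0,\infty)$, hence injective on $\mathbb{Z}$, so $P(\mathbb{Z})$ is sparse and meets any bounded interval in a short explicit list. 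Since the dominant term of $P$ has degree $3$, I take $B=5^{3}=125$ and set
\[
A=\Big\{\sum_{i=0}^{k-1}a_{i}\,125^{\,i}\ :\ a_{i}\in D\Big\}\subseteq[\,125^{k}\,]
\]
for a digit set $D\subseteq\{0,1,\dots,124\}$ to be chosen; then $|A|=|D|^{k}$, and taking $k\asymp\log N$ yields $|A|\gg N^{\log_{125}|D|}$, so it suffices to produce a valid $D$ with $|D|=50$, since $\log_{125}50=\tfrac13\log_{5}50=\tfrac{2+\log_{5}2}{3}=\gamma$. (For the pure cube one may already take $D$ to be the $25$ multiples of $5$ in this range, giving exponent $\tfrac23$; exploiting the quadratic factor $x^{2}$ is what lets one reach $50$.)

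To verify that $A-A$ avoids the nonzero values of $P$, take distinct $a,a'\in A$, let $j$ be least with $a_{j}\neq a'_{j}$, and write $d=a-a'=125^{\,j}m$ with $m\equiv c_{j}:=a_{j}-a'_{j}\pmod{125}$, $0<|c_{j}|<125$, so that $v_{5}(d)=3j+v_{5}(c_{j})$ with $v_{5}(c_{j})\in\{0,1,2\}$ and the $5$-free part of $d$ is congruent mod $5$ to that of $c_{j}$. If $3j+v_{5}(c_{j})$ is odd, then $d\notin P(\mathbb{Z})$ by the even-valuation fact. If it is even, say $v_{5}(d)=2\ell$, then $d=P(x)$ would force $v_{5}(x)=\ell$ and, with $x=5^{\ell}u$, force $d/5^{2\ell}=u^{2}(1+5^{\ell+1}u)=u^{2}+5^{\ell+1}u^{3}$: this is impossible when the $5$-free part of $c_{j}$ is a quadratic nonresidue mod $5$, and otherwise one reads off that the low-order base-$5$ digits of $d/5^{2\ell}$ are exactly those of $u^{2}$, which reconstructs $u$ uniquely (here using the monotonicity of $P$) and hence determines all the higher digits of $d$, so that a contradiction must surface at one of those forced coordinates --- unless $d$ is simply too small to be a nonzero value of $P$, which is checked directly. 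I will take $D$ so that, reading digits in base $5$, two out of every three consecutive coordinates are free while the remaining one is confined to a pair $\{0,r\}$ with $r$ a quadratic nonresidue mod $5$ (or a mild variant): this gives $|D|=5\cdot 5\cdot 2=50$, and a case analysis along the dichotomy above --- the restricted coordinates absorbing the residually ambiguous cases, the valuation parity handling the rest --- shows that $A-A$ meets $P(\mathbb{Z})$ only in $0$.

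The crux, and the main obstacle, is exactly this last verification: the conditions ``$v_{5}(d)$ even and $5$-free part of $d$ a quadratic residue mod $5$'' are necessary but far from sufficient for $d\in P(\mathbb{Z})$, so a naive count of congruence obstructions only licenses a digit set of size $2^{3}=8$ and misses the target; instead one must use the rigidity of the base-$5$ expansion of $u^{2}+5^{\ell+1}u^{3}$ --- its low digits recover $u$, after which the whole expansion is determined --- to push the value demanded by $d=P(x)$ into a restricted coordinate where it cannot sit, all while keeping $|D|$ as large as $50$ rather than merely nonzero; the balance between these two requirements is what pins the exponent at $\tfrac{2+\log_{5}2}{3}$. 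Two technical points accompany this: carries in the base-$5$ subtraction $a-a'$ must be controlled (this is built into the shape of $D$), and the general univariate statement is obtained by running the same argument with $(5,\,x^{2}+5x^{3})$ replaced by a prime $p$ and a polynomial $Q$ of the form $x^{a}(1+p\cdot(\cdots))$ --- or with the analogous $p$-adic rigidity --- and optimizing over the base $p^{r}$.
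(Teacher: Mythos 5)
Your exponent bookkeeping is right ($\log_{125}50 = \tfrac{2+\log_5 2}{3}$), and the two structural facts about $P(x)=x^2(1+5x)$ that you isolate (even $5$-adic valuation, quadratic-residue $5$-free part) are the correct local obstructions. But the proposed digit set does not work, and the ``rigidity'' step you lean on to close the residually ambiguous cases is unsound. Concretely: if in each base-$125$ block you restrict the \emph{high} base-$5$ digit to $\{0,2\}$, then $a=6=(1,1,0)_5\in A$ and $a'=0\in A$ give $a-a'=6=P(1)$; if you restrict the \emph{low} digit, then $a=650=(0,0,1,0,1)_5\in A$ and $a'=0\in A$ give $a-a'=650=P(5)$; if you restrict the \emph{middle} digit, then $a=10=(0,2)_5$ and $a'=4=(4)_5$ are both in $A$ and $a-a'=6=P(1)$. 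In each failing case the valuation is even and the $5$-free part is a residue, so both of your ``necessary conditions'' are met, and the ambiguity was supposed to be resolved by reading $u$ off the low digits of $d/5^{2\ell}$ --- but those digits only determine $u$ modulo a power of $5$ (and up to sign), not uniquely, so no contradiction is forced at a later restricted coordinate. Your hedge ``or a mild variant'' does not rescue a uniform one-in-three restriction, because the obstruction is structural rather than a matter of which residue pair you pick.

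The idea you are missing is the Archimedean ingredient in the paper's Theorem~\ref{polygeneralised}. There the base-$5$ digits at even positions $i$ are restricted to $R=\{0,2\}$ \emph{only for} $0\le i<X$ with $X\approx kY/d=2Y/3$, and the remaining top third of positions is left completely free. The reason this suffices is a size argument, not a congruence argument: if the least differing digit position $j$ satisfies $j\ge X$, then Lemma~\ref{divide} forces $5^{\lceil j/2\rceil}\mid x$, hence $|f(x)|\gtrsim 5^{3j/2}\ge 5^{3X/2}\approx 5^{Y}$, which exceeds the trivial bound $|u-v|<5^Y$. This handles exactly the $1/d$ proportion of positions for which purely local arguments cannot work, and it is what pins down the exponent $\gamma=\tfrac{d-1+\log_m|R|}{d}$. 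Without some version of this growth argument, a construction that restricts a uniform $1/3$ of positions cannot succeed, as the counterexamples above show. The fraction of restricted positions is the same ($\approx Y/3$), so your exponent calculation is consistent with the paper's --- but the \emph{distribution} of restricted positions (alternating up to a cutoff versus one per block throughout) is essential, and so is the Case~3 argument that justifies leaving the top of the expansion unrestricted.
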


\begin{theorem}\label{sumsquare} For all positive integers \(N,\) the following hold. 
\begin{enumerate}[label=\upshape(\roman*), widest=iii]
    \item There exists a set \(A\subseteq[N]\) of size \(\vert A \vert \gg N^{1/2}\) with all non-zero differences avoiding the set \(\{x^2+y^2: x,y\in \mathbb{Z}\}.\) \label{i}
    \item There exists a set \(A\subseteq[N]\) of size \(\vert A \vert \gg N^{1/4}\) with all non-zero differences avoiding the set \(\{x_1^4+\dots +x_7^4: x_1,\dots, x_7\in \mathbb{Z}\}.\) \label{ii}
    
\end{enumerate}
\end{theorem}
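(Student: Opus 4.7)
My plan is to produce explicit digit-expansion constructions in the spirit of Ruzsa \cite{Ruzsa84}, choosing the base and digit alphabet in each part so that each pairwise difference carries a specific $p$-adic obstruction to membership in the relevant set.

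For part \ref{i}, I will fix a prime $p \equiv 3 \pmod 4$ (for concreteness $p = 3$) and set
\[
A = \Bigl\{\sum_{i=0}^{k-1} d_i\, p^{2i+1} : d_i \in \{0,1,\dots,p-1\}\Bigr\},
\]
with $k$ chosen maximal subject to $A \subseteq [N]$. A geometric-series estimate gives $k \sim \tfrac12 \log_p N$ and $\vert A \vert \gg N^{1/2}$. For distinct $a, a' \in A$, if $j$ denotes the least index at which the digit sequences differ, then $a - a' = p^{2j+1}\bigl( (d_j - d_j') + p^2 \cdot (\text{integer}) \bigr)$, and since $d_j - d_j'$ is a nonzero element of $\{-(p-1),\dots,p-1\}$, the bracketed factor is coprime to $p$. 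Hence $v_p(a - a') = 2j+1$ is odd, and by the classical characterisation (a positive integer is a sum of two squares iff every prime $\equiv 3 \pmod 4$ divides it to an even power), $a - a'$ is not a sum of two squares.

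For part \ref{ii}, I will adapt the above with $p = 2$ but space the digit positions by four:
\[
A = \Bigl\{\sum_{i=0}^{k-1} d_i\, 2^{4i+3} : d_i \in \{0,1\}\Bigr\},
\]
giving $\vert A \vert \gg N^{1/4}$. The analogous digit-difference argument yields $a - a' = 16^{j} \cdot 8 M$ with $M$ odd. The key local input is that $x^4 \equiv 0$ or $1 \pmod{16}$ for all $x \in \mathbb{Z}$, so any sum of seven fourth powers lies in $\{0,1,\dots,7\} \pmod{16}$; in particular $8M$ (which is $\equiv 8 \pmod{16}$) cannot be such a sum, handling the case $j = 0$. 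For $j \geq 1$ I will use that if $\sum_{i=1}^{7} x_i^4$ is divisible by $16$ then the residue count $\pmod{16}$ forces all $x_i$ to be even, so dividing through by $16$ yields another sum of seven fourth powers; iterating this descent $j$ times reduces to the $j = 0$ case.

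I expect the only real subtlety to be the descent step in part \ref{ii}; once the mod-$16$ observation and the descent are in place, the rest is routine Ruzsa-style bookkeeping. The exponents are pinned down by the trade-off between digit alphabet size and digit spacing: $\vert A\vert$ vs.\ range is $p^k$ vs.\ $p^{2k}$ (giving $1/2$) in part \ref{i} and $2^k$ vs.\ $16^k$ (giving $1/4$) in part \ref{ii}.
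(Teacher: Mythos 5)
Your constructions are identical to the paper's: for part \ref{i} the paper applies its general Theorem~\ref{general2} with $m=p\equiv 3\pmod 4$, $k=2$, $M=p^2$, and digit set $R'=\{0,p,\dots,(p-1)p\}$, which unwinds to exactly your set $\bigl\{\sum d_i p^{2i+1}\bigr\}$; for part \ref{ii} it takes $m=2$, $k=4$, $M=16$, $R'=\{0,8\}$, which is exactly your $\bigl\{\sum d_i 2^{4i+3}\bigr\}$. Your argument for part \ref{ii} is likewise the same mod-$16$ observation plus divide-by-$16$ descent that drives the paper's Theorem~\ref{general2}. Where you genuinely diverge is part \ref{i}: the paper runs the same uniform $p$-adic descent (if $x^2+y^2\equiv 0\pmod{p^2}$ then $p\mid x,y$, so strip a factor of $p^2$ and recurse down to the least differing digit), whereas you shortcut this by noting that every difference has odd $p$-adic valuation and then invoking the classical Fermat--Landau characterisation of sums of two squares. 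Both are correct; your route is more immediate for this particular form, while the paper's buys uniformity --- one lemma (Theorem~\ref{general2}) dispatches both parts, and extends to forms like $x^2+y^2+pxy$ or $ax^k+by^k$ where no off-the-shelf classical characterisation is available. One small bookkeeping point: your sets as written contain $0$, so shift by $1$ to land inside $[N]$ as the paper does.
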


 These are applications of the more general Theorems \ref{config}, \ref{polygeneralised} and \ref{general2} below.

 It is important to contrast our result with those available from density arguments alone. Given \(B \subseteq [N]\), the greedy algorithm (see \cite[\S~B.2]{Lyal13}) delivers a set \(A\subseteq [N]\) whose non-zero differences are not in \(B\) and where \(\vert A\vert \gg N/\vert B\vert.\) For example, an immediate lower bound for the problem in Theorem \ref{expoly} is a constant times \(N^{2/3}.\)  Similarly, one has
 \[\vert\{x^2+y^2\leq N:x
 \in \mathbb{Z}\}\vert \sim K\frac{N}{\sqrt{\log N}}\] where \(K\approx 0.764\) is the Landau-Ramanujan constant \cite[\S~2.3]{mathconst}. A greedy argument would therefore only deliver \(\vert A \vert\gg\sqrt{\log N}\) for Theorem  \ref{sumsquare} \ref{i}. 
 
 There is a well-used analogy between sums of two squares and primes minus one.  Indeed S\'ark\"ozy has shown that sets of integers lacking a difference equal to a prime minus one cannot have positive density, with the current quantitative record due to Wang \cite{Wang}. At present, the lower bounds for this problem are not too far from the greedy construction, of order $N^{o(1)}$ (see Ruzsa \cite{Ruzprime}). In relation to this problem, one may  view Theorem \ref{sumsquare} \ref{i} either as evidence towards the existence of a polynomial lower bound for primes minus one, or as evidence towards a demarcation in  the analogy with sums of two squares.
 
 In order to surpass the greedy bound in our results, we exploit the arithmetic structure of the set of differences to be avoided. The initial observation is that certain sets can be obstructed by congruence arguments; such as how squares are never congruent to \(\pm2\) modulo \(5.\)
 This was exploited in the construction of Ruzsa \cite{Ruzsa84}  for differences avoiding \(k\)th powers. We build upon this in several ways. 

 For fixed integers \(m\) and \(k\geq 2,\)  let \(R\subseteq\{0,1,\dots, m-1\}\) denote a subset whose differences between distinct elements never equal a \(k\)th power modulo \(m.\) For example, \(R=\{0,2\}\) has all non-zero differences avoiding squares modulo \(5.\)
 
\begin{theorem}[Ruzsa]\label{ruz}
Let \(m\) be square-free and \(k\geq2.\) For all positive integers \(N,\) there exists a set \(A \subseteq [N]\) with \((A-A) \cap \{x^k: x \in \mathbb{Z}\} = \{0\}\) and \(\vert A \vert \gg_m N^\gamma,\) where \begin{equation*}
\gamma =\frac{k-1+\log_m\vert R \vert}{k}.\end{equation*}
\end{theorem}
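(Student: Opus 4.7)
The plan is to construct $A$ as a set of integers whose base-$m^k$ representations use digits drawn from a carefully chosen subset $S \subseteq \{0,1,\ldots,m^k-1\}$ of size $m^{k-1}|R|$, designed so that $S - S$ avoids $k$-th powers modulo $m^k$. This promotes the mod-$m$ obstruction provided by $R$ to a mod-$m^k$ obstruction, which is exactly what the exponent $\gamma = (k-1+\log_m|R|)/k$ demands.

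First I would take
\[
S := \{\, r + mt : r \in R,\ 0 \le t < m^{k-1} \,\}
\]
and verify that for distinct $s = r + mt$, $s' = r' + mt'$ in $S$, the difference $s - s'$ is not a $k$-th power modulo $m^k$. If $r \neq r'$, then reducing modulo $m$ gives $s - s' \equiv r - r' \pmod{m}$, which is not a $k$-th power modulo $m$ by the hypothesis on $R$. If $r = r'$, then $s - s' = m(t - t')$ with $0 < |t - t'| < m^{k-1}$; any congruence $s - s' \equiv y^k \pmod{m^k}$ would force $m \mid y^k$, hence $m \mid y$ by squarefreeness of $m$, so $m^k \mid y^k$ and therefore $m^{k-1} \mid t - t'$, a contradiction.

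With $S$ in hand, I would set $L = \lfloor (\log N)/(k \log m) \rfloor$ and
\[
A := \left\{\, 1 + \sum_{j=0}^{L-1} s_j m^{kj} : s_j \in S \,\right\} \subseteq [N].
\]
Uniqueness of base-$m^k$ digit expansions yields $|A| = |S|^L = (m^{k-1}|R|)^L \gg_m N^{\gamma}$. To verify the difference-set condition, suppose $n, n' \in A$ satisfy $n - n' = x^k \neq 0$, and let $j^*$ be the least index at which the digits $s_{j^*}, s_{j^*}'$ of $n, n'$ disagree. Writing $n - n' = m^{k j^*} c$ with $c \equiv s_{j^*} - s_{j^*}' \pmod{m^k}$, the divisibility $m^{kj^*} \mid x^k$ together with squarefreeness of $m$ forces $m^{j^*} \mid x$, so $c = (x/m^{j^*})^k$ is genuinely a $k$-th power. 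Reducing modulo $m^k$ then contradicts the property of $S$ established above.

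The main obstacle (and conceptual heart of the argument) is the lift from $R$ to $S$: squarefreeness of $m$ is used essentially twice, once to obstruct the intra-$R$-block differences $m(t-t')$ inside $S$, and once more to ensure that $c$ is literally a $k$-th power in $\mathbb{Z}$ rather than just a residue modulo $m^k$. Everything else is bookkeeping around the base-$m^k$ expansion.
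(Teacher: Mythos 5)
Your proof is correct and matches the base-$m^k$ route that the paper itself develops. The paper's ``official'' proof of Theorem~\ref{ruz} is as the special case $f(x)=x^k$, $d=k$, $X=Y$ of Theorem~\ref{polygeneralised}, which works with base-$m$ expansions and splits the index of first disagreement into the cases $k\mid j$ and $k\nmid j$. Your construction is the same set, repackaged: your $S=\{r+mt: r\in R,\ 0\le t<m^{k-1}\}$ written as a single base-$m^k$ digit is exactly a block of $k$ base-$m$ digits with the lowest digit in $R$ and the rest free, and your two sub-cases $r\ne r'$ and $r=r'$ in the $S$-property correspond precisely to the paper's Case~1 and Case~2. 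The paper actually carries out this very lift in Proposition~\ref{mod2}(i) and applies it through Theorem~\ref{general2}, so the two constructions are, as the paper notes in \cref{mod}, equivalent. One small inaccuracy in your commentary: you say squarefreeness of $m$ is used a second time to deduce $m^{j^*}\mid x$ from $m^{kj^*}\mid x^k$, but since $m^{kj^*}=(m^{j^*})^k$, the implication $a^k\mid x^k\Rightarrow a\mid x$ holds for any integer $a$ by comparing $p$-adic valuations, with no squarefreeness needed. Squarefreeness is genuinely used only once, in the $r=r'$ branch of the $S$-property, where $m\mid y^k\Rightarrow m\mid y$.
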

One can try to find optimal \(m\) and \(R\) to maximise this exponent for a given \(k.\) Indeed, the current best bound for square-free differences of \(\vert A \vert \gg N^{0.7334\dots}\) was established in this way by Beigel and Gasarch \cite{BeGa08} and independently by Lewko \cite{Lewko15}, both via a computer search. Likewise for cube-free differences, a calculation of Lewko \cite{Lewko15} established a lower bound of \(\vert A \vert \gg N^{0.8616\dots}\) (compare this to Theorem \ref{expoly}).

It was conjectured by Ruzsa that for \(k=2,\) one has \(\vert R \vert <m^{1/2}\) and thus  \(\gamma<3/4.\)  Ruzsa \cite{Ruzsa84} claimed to have proved this when \(m\) is a product of primes congruent to \(1\) modulo \(4.\) Based on this apparent limit of Ruzsa's construction, as well as a finite field analogy (see \cite[\S~1.4]{Rice19}), some have speculated that \(N^{3/4}\) may be the correct order of magnitude for the \emph{upper bound} as well. Observe that to avoid \(\{x,x+y,x+y^2\},\) one could simply take a set with  square-free differences. We improve upon this using the extra information from the \(x+y\) term --- in particular, we break through the \(N^{3/4}\) barrier.

 The key to Theorem \ref{nonlincor} is to obstruct configurations by controlling iterative difference sets: let \((R_n)_{n= 0}^\infty\) denote a sequence of non-empty subsets of \(\{0,1,\dots,m-1\}\) satisfying
 \((R_{n+1}-R_{n+1})\cap f(R_n-R_n) \subseteq\{0\}\) modulo \(m.\) When \(f(x)=x^k\) this means 
\[r_{n+1}-r_{n+1}'\not\equiv (r_n-r_n')^k \pmod m\]
for all distinct \(r_{n+1},r_{n+1}'\in R_{n+1}\) and all \(r_n,r_n' \in R_n.\) For instance, such a sequence could begin with \(R_0= \{0,1,\dots,m-1\}\) and \(R_1=R\) with \(R\) as previously defined.
 
\begin{theorem}[Non-linear progressions]\label{config}
   Let \(m\) be square-free and \(k\geq 2.\) For all \(\varepsilon>0\) and all positive integers \(N,\)  there exists a set \(A\subseteq[N]\) with no non-trivial configuration of the form \(\{x,x+y,x+y^k\}\)  and \(\vert A \vert\gg_{m,\varepsilon} N^{\gamma-\varepsilon},\) where 

\begin{equation}\label{lambda}
\gamma = (k-1)\left(\frac{\log_m\vert R_0\vert}{k}+\frac{\log_m\vert R_1 \vert}{k^2}+  \dots +\frac{\log_m\vert R_n \vert}{k^{n+1}} +  \dots\right),
\end{equation}
for sets $R_n$ defined as above.
\end{theorem}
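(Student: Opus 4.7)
I would generalise Ruzsa's Cantor-type construction by stratifying base-$m$ digit positions according to the $k$-adic valuation $v_k$ and imposing at each stratum the residue constraint from the corresponding $R_n$. This level function is dictated by the identity $v_k(kj) = v_k(j) + 1$: since the lowest nonzero base-$m$ digit of $y^k$ lies at position $kj_0$ whenever the lowest nonzero digit of $y$ lies at $j_0$, multiplication by $k$ shifts level-$n$ data into level-$(n+1)$ data, interlocking with the defining hypothesis on $(R_n)$.

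\textbf{Construction and density.} Given $\varepsilon > 0$, choose a truncation $N_\ast$ so that $(k-1)\sum_{n \geq N_\ast}\log_m|R_n|/k^{n+1} < \varepsilon/2$. For $N$ large set $L = \lfloor \log_m N \rfloor$ and, for $i \geq 1$, let $\ell(i) := v_k(i)$, with $\ell(0) := +\infty$. Write $a_i$ for the $i$-th base-$m$ digit of $a$, and define $A \subseteq [N]$ as the set of integers $a = \sum_{i=0}^{L-1} a_i m^i$ with $a_i \in R_{\ell(i)}$ whenever $\ell(i) < N_\ast$, and $a_i = 0$ otherwise. The number of positions $i \in \{1, \dots, L-1\}$ with $\ell(i) = n < N_\ast$ is $L(k-1)/k^{n+1} + O(1)$, so $\log_m|A| = L(k-1)\sum_{n < N_\ast}\log_m|R_n|/k^{n+1} + O(N_\ast)$, yielding $|A| \gg_{m, \varepsilon} N^{\gamma - \varepsilon}$ once $L$ is large enough (the conclusion being trivial otherwise).

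\textbf{Avoidance.} Suppose for contradiction that $x, x+y, x+y^k \in A$ with $y \neq 0$; without loss of generality $y > 0$. The bound $x+y^k \leq N$ gives $y \leq m^{L/k}$, so the lowest nonzero base-$m$ digit $d$ of $y$ occurs at some position $j_0 < L/k$ and satisfies $d \in \{1, \dots, m-1\}$. Because $y$ vanishes in positions below $j_0$, the subtraction $y = (x+y) - x$ induces no borrow at $j_0$, giving $d \equiv (x+y)_{j_0} - x_{j_0} \pmod m$. Necessarily $\ell(j_0) < N_\ast$ (else both digits on the right vanish, forcing $d \equiv 0$), so $d \in R_{\ell(j_0)} - R_{\ell(j_0)} \pmod m$. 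Square-freeness of $m$ yields $d^k \not\equiv 0 \pmod m$, and the expansion $y^k = d^k m^{kj_0} + O(m^{kj_0+1})$ places the lowest nonzero base-$m$ digit of $y^k$ at position $kj_0$ with value $d^k \bmod m$. Applying the same no-borrow analysis to $y^k = (x+y^k) - x$ at position $kj_0$: if $\ell(kj_0) < N_\ast$ then $d^k \in R_{\ell(j_0)+1} - R_{\ell(j_0)+1} \pmod m$ (using $\ell(kj_0) = \ell(j_0)+1$), and the hypothesis $(R_{\ell(j_0)+1} - R_{\ell(j_0)+1}) \cap f(R_{\ell(j_0)} - R_{\ell(j_0)}) \subseteq \{0\}$ forces $d^k \equiv 0 \pmod m$; if $\ell(kj_0) \geq N_\ast$, the relevant digits are zero in both $x$ and $x+y^k$, yielding the same congruence. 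Either outcome contradicts $d^k \not\equiv 0 \pmod m$.

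\textbf{Main obstacle.} The central technical point is the borrow-tracking in the two subtractions: the mod-$m$ congruences extracted at positions $j_0$ and $kj_0$ must be free of carry corrections from lower positions, which follows from the vanishing of lower-order digits of $y$ and $y^k$. Square-freeness of $m$ is used crucially to propagate $d \not\equiv 0$ through the $k$-th power and to make the level-boundary fallback close the argument. Conceptually, the novelty beyond Ruzsa's single-level construction is aligning the multiplication-by-$k$ action on digit positions with the shift $R_n \to R_{n+1}$ via the valuation $v_k$, which is precisely what produces the iterative exponent \eqref{lambda}.
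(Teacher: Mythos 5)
Your proposal is correct and follows essentially the same approach as the paper: constrain the base-$m$ digit at position $i$ to lie in $R_{v_k(i)}$, extract a mod-$m$ digit congruence at position $j_0 = v_m(y)$ for the pair $(x,x+y)$ and at position $kj_0$ for the pair $(x,x+y^k)$, and invoke the defining hypothesis on $(R_n)$ to force $d^k \equiv 0 \pmod m$, contradicting square-freeness. Your truncation at level $N_\ast$ and the choice $a_0 = 0$ are harmless cosmetic variants of the paper's construction, and the ``without loss of generality $y>0$'' is not literally a reduction, but the argument runs unchanged for $y<0$ by working with $j_0 = v_m(y)$ and $d \equiv y/m^{j_0} \pmod m$ since both $R-R$ and the hypothesis are symmetric under negation.
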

 
\begin{theorem}[Inhomogeneous polynomials]\label{polygeneralised}
Let \(m\geq 2\) be square-free, \(d\geq k \geq 2, a_d\neq0\) and \(\gcd(a_k,m)=1\). Suppose that \(f(x)=\sum_{i=k}^d a_ix^i \in \mathbb{Z}[x]\) has zero as its only root modulo \(m.\) Then for all positive integers \(N,\) there exists a set \(A\subseteq[N]\) with \((A-A) \cap f(\mathbb{Z})=\{0\} \) and \(\vert A \vert \gg_{m,f} N^{\gamma},\) where \begin{equation} \label{gamma}
\gamma = \frac{d-1+\log_m\vert R \vert}{d}. \end{equation}
\end{theorem}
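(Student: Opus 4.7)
The plan is to adapt Ruzsa's digit construction from Theorem~\ref{ruz} by enlarging the block size from $k$ to $d$. Explicitly, set
\[
A = \left\{\sum_{j=0}^L c_j m^j \in [N] : c_{jd} \in R \text{ for all applicable } j,\ c_i \in \{0,1,\dots,m-1\} \text{ otherwise}\right\},
\]
with $L = \lfloor \log_m N \rfloor$. A block-by-block count of the digit freedoms (one $R$-slot and $d-1$ free slots per block of $d$ positions) immediately yields $|A|\gg_{m,f} N^{(d-1+\log_m|R|)/d}$, matching the claimed exponent.

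The key arithmetic input is the identity $v_p(f(n))=k\,v_p(n)$ valid for every prime $p\mid m$. Writing $f(n) = n^k g(n)$ with $g(n) = a_k + a_{k+1}n + \cdots + a_d n^{d-k}$, the coprimality $\gcd(a_k,m)=1$ forces $g(n)\equiv a_k\not\equiv 0\pmod p$ whenever $p\mid n$, while the hypothesis that $f$ has only zero as a root modulo $m$ gives $f(n)\not\equiv 0\pmod p$ (and hence $g(n)\not\equiv 0\pmod p$) whenever $p\nmid n$. Expanding $f(m^s u)/m^{ks}$ term by term with $\gcd(u,m)=1$ and tracking which contributions survive modulo higher powers produces the finer congruence
\[
\frac{f(m^s u)}{m^{ks}} \equiv a_k u^k \pmod{m^{s+1}},
\]
which is the engine for the recursive descent.

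To verify $(A-A)\cap f(\mathbb{Z})=\{0\}$, I would assume $a\neq a'$ in $A$ with $a-a'=f(n)$ for some $n\neq 0$ and derive a contradiction. Let $t$ be the smallest index at which $c_t\neq c_t'$. Since $|c_t-c_t'|<m$, we get $v_p(a-a')=t$ for every $p\mid m$; matching with $v_p(f(n))=k\,v_p(n)$ forces $t=k\,v_p(n)$ independent of $p$, whence $n=m^{t/k}u$ with $\gcd(u,m)=1$. Extracting $(a-a')/m^t$ and reducing mod $m$ gives $c_t-c_t'\equiv a_k u^k\pmod m$. When $d\mid t$, the position $t$ carries an $R$-constraint, so $c_t-c_t'\in R-R\setminus\{0\}$; this set avoids nonzero $k$-th powers modulo $m$, and after absorbing the unit $a_k$ into the freedom of choosing $R$ (permissible since $\gcd(a_k,m)=1$), this contradicts $a_k u^k$ lying in the $k$-th power coset.

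The main obstacle is the sub-case $d\nmid t$, where $c_t$ is a free digit and the single-position argument does not fire. Here one widens the window: the finer congruence above determines $(a-a')/m^t$ modulo $m^{s+1}$ (where $s=t/k$), imposing a coupled constraint on the block $c_t,c_{t+1},\dots,c_{t+s}$ of base-$m$ digits. For $s$ large relative to $d$ this window necessarily straddles a position of the form $jd$ at which the $R$-restriction is active, and the coupling between the $R$-coset at that position and the $k$-th power structure of $a_ku^k$ supplies the obstruction. A standard descent $u\mapsto u/m$ whenever $m\mid u$ terminates the recursion, and handling the small-$s$ base cases by a direct mod-$m^{d}$ computation closes the remaining gap, giving $n=0$ and completing the proof.
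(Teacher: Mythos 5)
Your construction is wrong: placing the $R$-constraints at positions $i\equiv 0\pmod d$ does not match the $k$-th power structure of $f$, and the set you build does in fact contain forbidden differences. A concrete counterexample: take $f(x)=x^2+5x^3$, $m=5$, $R=\{0,2\}$ (so $k=2$, $d=3$). Your set $A$ constrains digits at positions $0,3,6,\dots$. But $f(5)=650=(1,0,1,0,0)_5$, so $a=650$ and $a'=0$ both lie in $A$ (each has digit $0\in R$ at positions $0,3,6$), yet $a-a'=f(5)$. The problem is exactly the sub-case you flag: here $t=2$ is the first differing digit, $k\mid t$ but $d\nmid t$, the digit $c_t$ is free, and nothing in your argument prevents this. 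The ``finer congruence'' you invoke is also incorrect --- expanding $f(m^su)/m^{ks}=a_ku^k+a_{k+1}m^su^{k+1}+\cdots$ only gives $\equiv a_ku^k\pmod{m^s}$, not $\pmod{m^{s+1}}$ --- and even a correct version would constrain the digits of $a-a'$, not the digits of $a$ and $a'$ separately (there are carries/borrows). Your ``widening the window'' and ``coupling'' claims do not produce a contradiction, and the example above shows they cannot.

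The missing idea is the cutoff. The paper keeps block size $k$ (constraints at $i\equiv 0\pmod k$), but only for $i<X$ where $X:=k(Y+1)/d$; for $X\leq i<Y$ all digits are free. The case $j<X$, $k\nmid j$ is ruled out by the $m$-adic valuation argument (same as yours); the case $j<X$, $k\mid j$ is ruled out by the $R$-constraint (same as yours); and the new case $j\geq X$ is ruled out by a \emph{size} argument: $|f(m^{\lceil j/k\rceil}y)|\gtrsim m^{jd/k}\geq m^{Y+1}>N$ exceeds any difference in $[N]$, because the degree of $f$ is $d$. It is precisely this real-analytic (global) obstruction from the degree $d$, combined with the $m$-adic obstruction from the low-order term $a_kx^k$, that yields the exponent $\gamma=(d-1+\log_m|R|)/d$: the number of constrained positions is only $\lceil X/k\rceil\approx Y/d$, so the count is $|R|^{Y/d}m^{Y(d-1)/d}=N^\gamma$. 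You correctly identified the size target and the valuation identity $v_p(f(n))=k\,v_p(n)$, but tried to achieve $\gamma$ by thinning the constraint spacing to $d$ rather than by truncating the constrained range; the former breaks the digit argument, the latter is what actually works.
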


 Exponent \eqref{gamma} incorporates local (\(m\)-adic) obstructions from the \(x^k\) term (implicit in the defintion of \(R\)), as well as global (\(\mathbb{R}\)) obstructions  from the degree \(d\) of \(f.\)  Also note that the polynomials to which the above applies include those of the form \(a_kx^k+mx^kg(x)\) and \(a_kx^k+x^k(x+1)\dots(x+m-1),\) for example. One can trivially do much better for polynomials which have no root modulo some positive integer --- such a polynomial is said to not be  \emph{intersective}. For example, \(x^2+1\) has no root modulo \(3\) and it  follows that one may take \(A=3\mathbb{N}\cap [N].\)
 
Finally, for the multivariate case it is beneficial to work modulo \(m^k.\) For fixed \(m,k\) and \(F\) as below, let \(R'\subseteq\{0,1,\dots, m^k-1\}\) denote a subset whose differences between distinct elements are never in the image \(F(\mathbb{Z}^n)\) modulo \(m^k.\) 
\begin{theorem}[Homogeneous multivariate polynomials]\label{general2}
Let \(m\geq2\) be a positive integer. Let \(F(\mathbf{x}) \in \mathbb{Z}[x_1,\dots,x_n]\) be a homogeneous polynomial of degree \(k\geq 2.\) Suppose that the only roots of \(F\) modulo \(m^k\) are congruent to \(\mathbf{0}\) modulo \(m.\) 
 Then for all integers \(N,\) there exists a set \(A\subseteq[N]\) with \((A-A) \cap F(\mathbb{Z}^n)=\{0\} \) and \(\vert A \vert \gg_{m,k} N^{\gamma},\) where
\begin{equation}\label{gammalatter}
    \gamma=\frac{\log_m\vert R' \vert }{k}.
\end{equation}
\end{theorem}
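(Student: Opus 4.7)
The plan is to imitate Ruzsa's construction, this time working in base $M := m^k$ with digits drawn from $R'$. Let $L := \lfloor \log N / \log M \rfloor$ and set
\[
A := \Bigl\{ \sum_{j=0}^{L-1} r_j M^j : r_j \in R' \Bigr\}.
\]
A trivial shift places $A$ inside $[N]$, and uniqueness of $M$-adic expansions yields $|A| = |R'|^L \gg_{m,k} N^{\log_M |R'|} = N^{\gamma}$, matching \eqref{gammalatter}.

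To verify $(A-A) \cap F(\mathbb{Z}^n) = \{0\}$, suppose for contradiction that $a, b \in A$ are distinct and $a - b = F(\mathbf{x})$ for some $\mathbf{x} \in \mathbb{Z}^n$. Let $j_0$ be the least index at which the base-$M$ digits of $a$ and $b$ disagree, with values $r_{j_0}, r'_{j_0} \in R'$. Then
\[
a - b \equiv (r_{j_0} - r'_{j_0}) M^{j_0} \pmod{M^{j_0+1}},
\]
with $r_{j_0} - r'_{j_0} \not\equiv 0 \pmod M$ since both digits lie in $\{0, \dots, M-1\}$. In particular $F(\mathbf{x}) \equiv 0 \pmod{m^{kj_0}}$.

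The crux of the argument is a Hensel-style lift: \emph{if $F(\mathbf{x}) \equiv 0 \pmod{m^{kj}}$, then $\mathbf{x} \equiv \mathbf{0} \pmod{m^j}$}. I would prove this by induction on $j$. For the inductive step, reducing modulo $m^k$ and applying the root hypothesis on $F$ forces $\mathbf{x} \equiv \mathbf{0} \pmod m$; writing $\mathbf{x} = m\mathbf{y}$ and invoking the homogeneity of $F$ of degree $k$ gives $F(\mathbf{x}) = m^k F(\mathbf{y})$, reducing the claim to the case $j-1$ for $\mathbf{y}$. Applying this with $j = j_0$, write $\mathbf{x} = m^{j_0} \mathbf{u}$, so that $F(\mathbf{x}) = M^{j_0} F(\mathbf{u})$. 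Dividing $a - b = F(\mathbf{x})$ by $M^{j_0}$ and reducing modulo $M$ gives
\[
F(\mathbf{u}) \equiv r_{j_0} - r'_{j_0} \pmod{m^k},
\]
placing a non-zero difference of elements of $R'$ into the image $F(\mathbb{Z}^n)$ modulo $m^k$ --- a contradiction to the definition of $R'$.

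The principal obstacle is the Hensel-style lift, and it hinges decisively on both hypotheses of the theorem: the structure of the roots of $F$ modulo $m^k$ launches each inductive step, while the homogeneity of $F$ ensures that rescaling $\mathbf{x}$ by $m$ contributes an exact factor of $m^k$ with no residual error. The remaining digit counting and choice of $L$ are routine.
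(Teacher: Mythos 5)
Your proposal is correct and follows essentially the same route as the paper: digits drawn from $R'$ in base $M=m^k$, locate the lowest disagreeing digit, and repeatedly pull out a factor of $m$ from $\mathbf{x}$ using the root hypothesis together with homogeneity until a non-zero difference of $R'$ elements lands in the image of $F$ modulo $m^k$. The only cosmetic difference is that you package the iterated descent as a stand-alone Hensel-style lemma proved by induction on $j$, whereas the paper carries out the same iteration in-line on the digit equation.
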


This paper is organised as follows. In \cref{sectionnonlin} we prove Theorem \ref{config} and deduce Theorem \ref{nonlincor} by a suitable choice of \((R_n)_{n=0}^\infty.\) In \cref{sectioninhom} we prove Theorem \ref{polygeneralised}.  Theorem \ref{expoly} follows almost immediately. In \cref{sectionhom} we prove Theorem \ref{general2} and deduce Theorem \ref{sumsquare} from suitable choices of \(R'.\) In \cref{mod} we briefly discuss a connection between Theorem \ref{ruz}  and Theorem \ref{general2} by looking at the modular formulation of the problem.  Finally, in \cref{sectionopen} we outline an open problem of interest. 

\subsection*{Notation}
For functions \(f(N)\) and \(g(N),\) write \(f=O(g)\) or \(f \ll g\) or \(g\gg f\) to denote that there exists an absolute constant \(C>0,\) which may change at each appearance, such that \(\vert f\vert \leq C g \)  for all $N\geq 1.$ Write \(f\ll_\alpha g\) when \(C\) depends on \(\alpha\).  We write \(f\sim g\) to mean \(f/g \to 1\) and write \(f=o(g)\) to mean \(f/g\to 0,\) both as \(N \to \infty.\)

The floor function and ceiling function of \(x\)  are denoted \(\lfloor x \rfloor := \max\{n\in \mathbb{Z}:n\leq x\}\) and \(\lceil x \rceil:= \min\{n\in \mathbb{Z}:n\geq x\}\)  respectively.

\subsection*{Acknowledgement} The author would like to thank Sean Prendiville for suggesting the problem, his support and his helpful comments on earlier drafts.

 \section{Non-linear progressions}\label{sectionnonlin}

\begin{proof}[Proof of Theorem \ref{config}]
Consider the set of non-negative integers \(A\) with \(m\)-ary expansion \(\sum_{0 \leq i< Y}u_im^i\) satisfying
\begin{equation}\label{inhom2}
u_i \in \begin{cases}
R_0, &k\nmid i\\
R_1,  & k\mid\mid i \textrm{ or } i=0 \\
R_2, & k^2\mid\mid i \ \\
R_3, & k^3\mid\mid i \\
\vdots
\end{cases}
\end{equation}
where \(k^n \mid \mid i\) means \(k^n \mid i\) and \(k^{n+1} \nmid i.\) 

For elements \(u=\sum u_im^i, v=\sum v_im^i\) and \(w=\sum w_im^i\) in \(A,\) suppose for contradiction that \((u, v,w)=(u,u+x,u+x^k)\) with \(x\neq 0.\) Let \(j\) be the smallest index such that \(u_j\neq w_j,\) and \(\ell\) be the smallest index such that \(u_\ell\neq v_\ell.\)  We have
\[w-u=(v-u)^k,\]
thus 
\[m^j(w_j-u_j)+zm^{j+1}= (m^{\ell}(v_{\ell}-u_\ell)+\tilde{z}m^{\ell+1})^k\]
for some integers \(z\) and \(\tilde{z}.\) Looking at the highest power of \(m\) which divides both sides gives \(j=k\ell\) (using that \(m\) is square-free), so in particular \(j\) is not in the first case of \eqref{inhom2}. On dividing by \(m^j\) we find that
\[w_j-u_j\equiv(v_{j/k}-u_{j/k})^k \pmod m.\] 
By the definition of \((R_n)_{n=0}^\infty\) and the construction in \eqref{inhom2}, this is impossible. 

Now  we calculate the size of \(A.\)  The number of multiples of \(k^n\) in \([1,Y)\cap \mathbb{N}\)  is 
\(\lceil Y/k^n\rceil-1,\) and therefore 
\[\vert\{i: 1\leq i < Y \text{ and } k^n \mid\mid i\} \vert = \left\lceil\frac{Y}{k^n}\right\rceil-\left\lceil\frac{Y}{k^{n+1}}\right\rceil .\] It follows that for all \(n\) we have 
\begin{align*}
\vert A \vert & \geq \vert R_0 \vert ^{\lceil{Y\rceil}-\lceil{Y/k}\rceil}\vert R_1 \vert ^{1+\lceil {Y/k}\rceil-\lceil{Y/k^2}\rceil} \vert R_2 \vert^{\lceil{Y/k^2}\rceil-\lceil{Y/k^3}\rceil}\dots \vert R_{n-1} \vert ^{\lceil{Y/k^{n-1}}\rceil-\lceil{Y/k^{n}}\rceil}\\
&\geq m^{1-n} \vert R_0 \vert ^{(k-1)Y/k}\vert R_1 \vert ^{(k-1)Y/k^2} \vert R_2 \vert^{(k-1)Y/k^3}\dotsm \vert R_{n-1} \vert ^{(k-1)Y/k^n}\\
& = m^{1-n} \left(|R_0| |R_1|^{1/k} \dotsm |R_{n-1}|^{1/k^{n-1}}\right)^{(1-1/k)Y}.
\end{align*}

Fix \(\varepsilon>0.\)  Choose \(n=n(\varepsilon)\) sufficiently large so that
$$
\varepsilon \geq \sum_{j\geq n} 2^{-j} \geq \sum_{j \geq n} \frac{\log_m|R_j|}{k^j}.
$$ 
Assume that \(N>m.\) Then taking $Y:= \log_m N $ gives \(\max A <m^Y =N,\) thus \(A \subseteq \{0,1,\dots,N-1\}\) and 
\begin{align*}
|A| \geq  m^{1-n}\left(|R_0| |R_1|^{1/k} \dotsm |R_{n-1}|^{1/k^{n-1}}\right)^{(1-1/k)\log_mN} \geq m^{1-n} N^{\gamma - \varepsilon}.
\end{align*}
Finally, add \(1\) to all the elements of \(A.\)

When \(1\leq N \leq m\) we can simply take \(A\) to be a singleton. We may adjust the implicit constant so that $\vert A \vert  \gg_{m, \varepsilon} N^{\gamma - \varepsilon}$ then holds for all \(N\geq 1\) as desired. 
\end{proof}

 \begin{proof}[Proof of Theorem \ref{nonlincor}.]
Choosing \((R_n)_{n=0}^\infty=(\{0,1,\dots, m-1\}, R_1, R_2, R_1, R_2, \dots)\)  and evaluating \(\eqref{lambda}\) gives
\[\gamma=\frac{k-1}{k}+ \frac{\log_m\vert R_1 \vert}{k+1}+ \frac{\log_m\vert R_2 \vert}{k(k+1)}.\]

We may verify\footnote{Note that \(R_1\) and \(R_2\) were found by adapting a maximum-clique-searching C\texttt{++} algorithm by Konc \cite{koncweb} based on the work of Konc and  Janežić \cite{konc}.} that for \( k=2,  m=65,\) the sets \[ 
R_1=\{31,39,8,62,19,42,50\}
\]
and \[
R_2=\{31,47,62,34,42,39,27,8,54,23,0,58,19,50,15,12,4\}
\]  satisfy the conditions of Theorem \ref{config}.
 \end{proof}
 
\section{Inhomogeneous polynomials}\label{sectioninhom}

We begin with a small lemma which roughly states that if many square-free \(m\) divide the polynomial, then many \(m\) must divide the variable. 
\begin{lemma} \label{divide}
For \(m\) and \(f\) given in Theorem \ref{polygeneralised}, if \(f(x)\equiv 0 \pmod {m^j}\) then \(x \equiv 0 \pmod {m^{\lceil{j/k}\rceil}}.\)
\end{lemma}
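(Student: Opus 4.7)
The plan is to work one prime at a time and reduce the statement to the single identity $v_p(f(x)) = k\,v_p(x)$ for every $p \mid m$. Since $m$ is square-free, $m = p_1 \cdots p_r$, so $m^j = \prod_i p_i^j$ and $m^{\lceil j/k \rceil} = \prod_i p_i^{\lceil j/k \rceil}$. By the Chinese remainder theorem it then suffices to prove that for each prime $p \mid m$, the hypothesis $p^j \mid f(x)$ forces $v_p(x) \geq \lceil j/k \rceil$.

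Before the valuation calculation I would record an auxiliary observation: the assumption that $0$ is the only root of $f$ modulo $m$ descends to each prime divisor of $m$. Indeed, given a hypothetical non-zero $x_0$ with $f(x_0) \equiv 0 \pmod p$, one can use $f(0) = 0$ together with CRT to construct $y$ with $y \equiv x_0 \pmod p$ and $y \equiv 0 \pmod{m/p}$; then $f(y) \equiv 0 \pmod m$ but $y \not\equiv 0 \pmod m$, contradicting the hypothesis on $f$.

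Now write $f(x) = x^k g(x)$ with $g(x) := a_k + a_{k+1} x + \cdots + a_d x^{d-k}$, and compute $v_p(f(x))$ by splitting on whether $p$ divides $x$. When $v_p(x) \geq 1$, we have $g(x) \equiv a_k \pmod p$, which is a unit since $\gcd(a_k, m) = 1$; hence $v_p(g(x)) = 0$ and $v_p(f(x)) = k\,v_p(x)$. When $v_p(x) = 0$, the prime-descent step above gives $f(x) \not\equiv 0 \pmod p$, so $v_p(f(x)) = 0 = k\,v_p(x)$. Either way, $v_p(f(x)) = k\,v_p(x)$. Thus $p^j \mid f(x)$ implies $v_p(x) \geq j/k$, and integrality upgrades this to $v_p(x) \geq \lceil j/k \rceil$. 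Reassembling over the primes dividing $m$ yields $m^{\lceil j/k \rceil} \mid x$.

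The main obstacle is really just setting up the valuation identity cleanly: the equality $v_p(f(x)) = k\,v_p(x)$ combines all three parts of the hypothesis in a coordinated way --- square-freeness of $m$ for the CRT reduction, $\gcd(a_k, m) = 1$ to keep $g(x)$ a unit when $p \mid x$, and the ``only root at zero'' property to handle the case $p \nmid x$. Once this identity is in place, the lemma drops out immediately.
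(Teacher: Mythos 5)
Your proof is correct, and the underlying observation---that $v_p(f(x)) = k\,v_p(x)$ for every $p \mid m$---is a clean closed form of what the paper establishes more implicitly. The paper argues by induction on $j$: from $f(x)\equiv 0 \pmod m$ and the ``only root at zero'' hypothesis it writes $x=my$, divides through by $m^k$ to get $g(y) := \sum_{i=k}^d a_i m^{i-k} y^i \equiv 0 \pmod{m^{j-k}}$, checks that $g$ again satisfies the hypotheses of Theorem \ref{polygeneralised} (using $\gcd(a_k,m)=1$ and square-freeness), and iterates, each step contributing one factor of $m$ to $x$ and reducing $j$ by $k$. Your route instead reduces to one prime at a time by CRT and then splits directly on whether $p \mid x$: if so, $g(x) := a_k + a_{k+1}x + \cdots$ is a $p$-adic unit because $p \nmid a_k$, so $v_p(f)=k\,v_p(x)$; if not, the descent of the ``only root at zero'' property from mod $m$ to mod $p$ (your CRT observation, which the paper never states but tacitly relies on) gives $v_p(f)=0$. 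The trade-off is that your approach needs that extra descent lemma and the language of $p$-adic valuations, but in return it yields an exact identity rather than an inductive one-sided estimate, and it isolates precisely how each of the three hypotheses (square-freeness, $\gcd(a_k,m)=1$, only root at zero) enters. Both are perfectly valid; yours is arguably the more transparent.
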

\begin{proof}
If \(j\leq k\) then the statement holds by the condition of the theorem. Otherwise, certainly \(f(x)\equiv 0 \pmod {m},\) thus \(x \equiv 0 \pmod m\) by the given property of \(f.\) Say \(x=my.\) Thus \(\sum_{i=k}^d a_im^iy^i \equiv 0 \pmod {m^j}.\) So \(g(y) := \sum_{i=k}^d a_im^{i-k}y^i \equiv 0 \pmod {m^{j-k}}.\) Then we can check \(g(y)\) satisfies the condition of the theorem too (using that \(m\) is square-free). Inductively, the result follows.
\end{proof}

The key ingredient in the proof of Theorem \ref{polygeneralised} is that once we have forced the variable \(x\) to have enough factors of \(m,\) we can then use properties of \(\mathbb{R}\) to obstruct solutions --- namely, that differences between two elements in \([N]\) are always less than \(N.\)

\begin{proof}[Proof of Theorem \ref{polygeneralised}]
Let us first deal with the case of \(a_k=1.\) 

For real numbers \(X\) and \(Y\) to be chosen later, consider the set of integers \(A\) with \(m\)-ary expansion \(\sum_{0 \leq i< Y}u_im^i\) satisfying

\begin{equation}\label{digits}
u_i \in \begin{cases}
R, & 0\leq i< X  \textrm{ and }  k\mid i ;\\
\{0,1,\dots,m-1\}, & \text{otherwise.}
\end{cases}
\end{equation}

For elements \(u=\sum u_im^i\) and \(v=\sum v_im^i\) in \(A\),  suppose for contradiction that \(u-v\in f(\mathbb{Z}) \setminus \{0\}.\) Let \(j\) denote the smallest index such that \(u_j \neq v_j.\) Then \[f(x)=u-v = m^j(u_j-v_j)+  z{m^{j+1}}\] for some integers \(z\) and $x$ with $x \neq 0$. By Lemma \ref{divide} we have that \(x=m^{\lceil{j/k}\rceil}y\) for some \(y.\) Thus
\begin{equation*}
    \sum_{i=k}^d a_i(m^{\lceil{j/k}\rceil}y)^i = m^j(u_j-v_j)+  z{m^{j+1}}.
\end{equation*}
This can be factorised into 
\begin{equation}\label{poly}
    m^{\lceil{j/k}\rceil k}y^k(1 + mg(y))= m^j(u_j-v_j)+  z{m^{j+1}}
\end{equation}
for some polynomial \(g \in \mathbb{Z}[y].\)
There are three cases to consider.

\emph{Case 1.} Suppose that $0\leq j< X$ and $k\mid j$.  Since \(\lceil{j/k}\rceil=j/k\), we see that \(m^j\) divides the left-hand-side of \eqref{poly}. On dividing by \(m^j\) we find that
\begin{equation}\label{a_k}
 y^k\equiv u_j-v_j \pmod m,    
\end{equation}
which contradicts the definition of \(R.\)

\emph{Case 2.} Suppose that $0\leq j< X$ and $k\nmid j$.
The greatest  power of \(m\) which divides the right-hand-side of \eqref{poly} is \(j.\) However, since \(m\) is square-free, we see that the greatest power of \(m\) dividing the left-hand-side is a multiple of \(k.\) This implies \(k\mid j,\) which is a contradiction. 

\emph{Case 3.} Suppose that $X \leq j < Y$.  Observe that $|u-v|$ lies in \([0,m^Y)\). Also, the polynomial \(f\) is dominated by its leading term, hence if \(X\) (and hence $j$) is sufficiently large in terms of the coefficients and degree of $f$, we have  for all non-zero integers \(y\) that
\[\frac{\vert a_d\vert}{2}m^{jd/k}\leq \frac{\vert a_d\vert}{2}m^{\lceil{j/k}\rceil d}\vert y \vert^d  < \vert f(m^{\lceil{j/k}\rceil}y)\vert = \vert u-v\vert  < m^Y.\]
Set\footnote{Of course if \(k=d\) then simply set \(X:=Y,\) then we cover all possibilities in cases 1 and 2 already} \(X:=k(Y+1)/d\) with $Y$ sufficiently large in terms of the coefficients and degree of $f$, so that
\[\frac{\vert a_d\vert}{2}m^{Y+1}< m^Y.\] This leads to \(m<2/\vert a_d \vert \leq 2,\) which is a contradiction. Hence no such \(u\) and \(v\) exist for this choice of $X$ and $Y$.

 All that is left is to calculate the size of this set \(A\). For \(\gamma\) as in \eqref{gamma}, this is
 \[\vert A \vert \geq {\vert R \vert}^{\lceil{X/k}\rceil}m^{\lceil{Y\rceil}-\lceil{X/k}\rceil}\geq m^{-2} \vert  R\vert ^{Y/d}m^{Y-Y/d}\gg_m m^{Y\gamma}.\]

Assume \(N\) is sufficiently large in terms of $m$ and the polynomial $f$ such that the quantity \(Y:= \log_m N\) is sufficiently large for the above argument to hold. Then  $\vert A \vert  \gg_{m} N^{\gamma}.$ Since \(\max A <m^Y=N,\) we have
\(A \subseteq \{0,1,\dots, N-1\}.\) Finally, add \(1\) to all elements of \(A.\) 

Therefore, we have \(N\ll_{m,f}1\) or $\vert A \vert  \gg_{m} N^{\gamma}.$ In the former case, we may take \(A\) to be a singleton. It follows that $\vert A \vert  \gg_{m, f} N^{\gamma}$ holds for all \(N\geq1\) as desired. 

We assumed throughout that \(a_k=1.\) For \(a_k\neq 1,\) we simply replace \(R\) with \(a_kR.\) Since we insist that \(\gcd (a_k,m)=1,\) we have \(\vert R \vert= \vert a_k R\vert.\) The only change in the argument is that \eqref{a_k} becomes 
\[a_ky^k\equiv u_j- v_j \pmod m\]
for \(u_j,v_j\in a_kR.\) Then \(u_j=a_k u_j'\) and \(v_j=a_kv_j'\) for some \(u_j',v_j' \in R,\) and so
\[y^k\equiv u_j'- v_j' \pmod m,\] giving the same contradiction as before.
\end{proof}
\begin{proof}[Proof of Theorem \ref{expoly}]
Choose \(m=5, R=\{0,2\}\) and \(f(x)=x^2+5x^3.\) Then \(f\) satisfies the conditions of the theorem: if \(f(x)\equiv 0 \pmod 5\) then \(x^2\equiv 0 \pmod 5\), thus \(x\equiv 0 \pmod 5,\) as \(5\) is prime. 
\end{proof}

\section{Homogeneous multivariate polynomials}\label{sectionhom}

\begin{proof}[Proof of Theorem \ref{general2}]
Let \(M=m^k.\) Consider the set of integers \(A\) with \(M\)-ary expansion \(\sum_{0 \leq i< Y}u_iM^i\) satisfying \(u_i \in R'\) for all \(i.\)

For elements \(u=\sum u_iM^i\) and \(v=\sum v_iM^i\) in \(A,\) suppose for contradiction that \(u-v \in F(\mathbb{Z}^n) \setminus \{0\}.\) Let \(j\) be the smallest index such that \(u_j\neq v_j.\) Then 
\[F(\mathbf{x})=u-v=M^j(u_j-v_j)+zM^{j+1}\]
for some integer \(z.\)
If \(j\geq1\) then by the property of \(F\) in the statement of the theorem we have \(\mathbf{x}=m\mathbf{y}.\) Then \(F(\mathbf{x})=m^kF(\mathbf{y})\) by homogeneity. Hence \[F(\mathbf{y})=M^{j-1}(u_j-v_j)+zM^{j}.\] Inductively this leads to \(F(\mathbf{z})=(u_j-v_j)+zM,\) where \(\mathbf{x}=m^j\mathbf{z},\) which contradicts the definition of \(R'.\) Hence no such \(u\) and \(v\) exist.

Now to calculate the size of \(A.\) That is,
\[\vert A \vert = \vert R' \vert ^{\lceil{Y\rceil}} \geq M^{Yk^{-1}\log_m\vert R' \vert}.\]

Assume that \(N>M.\) Then taking $Y:= \log_M N $ gives \(\max A <M^Y =N,\) thus \(A \subseteq \{0,1,\dots,N-1\}.\) Also, \(\vert A \vert \geq N^\gamma\) for \(\gamma\) as in \eqref{gammalatter}. Add \(1\) to all elements of \(A.\)

If \(1\leq N \leq M,\) then we may take \(A\) to be a singleton. By adjusting the implicit constant, we arrive at \(\vert A \vert \gg_{M} N^\gamma\) for all \(N\geq 1.\) Since \(M=m^k,\) the implicit constant depends only on \(m\) and \(k\) as desired.
\end{proof}

\begin{proof}[Proof of Theorem \ref{sumsquare} (i)]
Choose a prime \(p \equiv 3 \pmod 4.\) Then \(-1\) is not a quadratic residue modulo \(p.\)  We first show that \(x^2+y^2 \equiv 0 \pmod {p}\) if and only if \(x,y \equiv 0 \pmod p\) if and only if \(x^2+y^2 \equiv 0 \pmod {p^2}.\) All that needs to be shown is the first one-way implication. Indeed, if \(y \not\equiv 0 \pmod p\) then  \((x/y)^2\equiv -1 \pmod p,\) which is a contradiction. Thus \(y \equiv 0 \pmod p\) and necessarily \(x \equiv 0 \pmod p.\) In particular, \(F(x,y)=x^2+y^2\) satisfies the conditions of Theorem \ref{general2}.

Moreover, consider \(R'=\{0,p,\dots, (p-1)p\}\) of size \(p\) whose differences between distinct elements are multiples of \(p\) but not \(p^2.\) Thus non-zero  differences of \(R'\) never equal \(F(x,y)\) modulo \(p^2.\)
\end{proof}
This argument would attain the same bound \(\vert A \vert \gg_p N^{1/2}\) for the binary form \(x^2+y^2+pxy\) (again where \(p\equiv 3 \pmod 4\)). We may similarly show \(\vert A \vert\gg_k N^{1-1/k}\) for \(ax^k+by^k,\) so long as we can find a prime for which \(-a/b\) is not a \(k\)th power residue.
\begin{proof}[Proof of Theorem \ref{sumsquare} (ii)]
Observe that \(x^4\equiv 0 \textrm{ or } 1 \pmod {16}\) when \(x\) is even or odd respectively. Thus, if \(\sum_1^7 x_i^4 \equiv 0 \pmod {16}\) then all \(x_i\equiv 0 \pmod 2.\) Additionally, we may take \(R'= \{0,8\},\) since \(\sum_1^7 x_i^4 \not\equiv \pm 8 \pmod {16}.\)
\end{proof}
Especially in the multivariate scenario, we can extract more information from a modulo \(m^k\)  argument than from a modulo \(m\) argument. The above example illustrates this, since fourth powers are simple to analyse modulo \(16,\) but little would have been achieved by modulo \(2\) reasoning only.

\section{The Modular Problem}\label{mod}
The construction in \cref{sectioninhom} was base \(m,\) whereas the  construction of \cref{sectionhom} was base \(m^k.\) Both of these constructions can be applied to the problem of  differences avoiding  \(\{x^k:x\in \mathbb{Z}\}.\) We might expect that these constructions are equivalent. The aim of this section is demonstrate a partial result in this direction, showing a connection between the two constructions.

Recall that for fixed integers \(m\) and \(k\geq2,\) we write \(R\subseteq\{0,1,\dots, m-1\}\) for a subset whose non-zero differences are never equal to a \(k\)th power modulo \(m.\) Let \(r_k(m):=\max \vert R \vert\) and insist that \(m\) is square-free throughout this section.

For the task of constructing sets free of $k$th power differences, Theorem \ref{polygeneralised} (or equivalently Theorem \ref{ruz}) gives a lower bound of order \(N^\gamma,\) where
$$
\gamma=\frac{k-1+\log_m r_k(m)}{k},
$$
whilst Theorem \ref{general2} with \(F\) set to \(F(x)=x^k\) gives a lower bound of order \(N^\gamma,\) where
$$
\gamma=\frac{\log_m r_k(m^k)}{k}.
$$
If these bounds are the same, then comparing exponents gives \(r_k(m^k)=m^{k-1}r_k(m) \) for \(m\) square-free.
We prove the following.

\begin{proposition} \label{mod2}
\begin{enumerate}[label=\upshape(\roman*), widest=iii]
\item For \(m\) square-free, we have 
    \[r_k(m^k)\geq m^{k-1}r_k(m).\] 
\item For \(p\) prime not divisible by \(k,\) we have
\[r_k(p^k)=p^{k-1}r_k(p).\] 
    \end{enumerate}
\end{proposition}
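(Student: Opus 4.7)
My plan is to prove (i) by an explicit construction and (ii) by analysing the fibres of reduction modulo \(p\), with Hensel's lemma as the key tool.

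For part (i), let \(R \subseteq \{0, 1, \ldots, m-1\}\) achieve \(|R| = r_k(m)\), and set
\[
R' := \{r + ms : r \in R,\ 0 \leq s < m^{k-1}\} \subseteq \{0, 1, \ldots, m^k - 1\},
\]
so \(|R'| = m^{k-1} r_k(m)\). To check that differences of distinct elements of \(R'\) are never \(k\)th powers modulo \(m^k\), consider distinct \(r_1 + m s_1,\, r_2 + m s_2 \in R'\). If \(r_1 \neq r_2\), reducing the difference modulo \(m\) yields \(r_1 - r_2 \not\equiv 0 \pmod{m}\), which is not a \(k\)th power modulo \(m\) by the defining property of \(R\), hence cannot be a \(k\)th power modulo \(m^k\) either. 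If \(r_1 = r_2\) but \(s_1 \neq s_2\), the difference is \(m(s_1 - s_2)\); any solution of \(y^k \equiv m(s_1 - s_2) \pmod{m^k}\) forces \(p \mid y\) for every prime \(p \mid m\), and squarefreeness of \(m\) then gives \(m \mid y\), so \(y^k \equiv 0 \pmod{m^k}\). Thus \(m^{k-1} \mid s_1 - s_2\), contradicting \(0 < |s_1 - s_2| < m^{k-1}\).

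For part (ii) the remaining direction is the matching upper bound \(r_k(p^k) \leq p^{k-1} r_k(p)\). Let \(R' \subseteq \{0, \ldots, p^k - 1\}\) be extremal, and partition it by residue modulo \(p\): set \(\bar{R} = \{r \bmod p : r \in R'\}\) and \(S_a = \{(r - a)/p : r \in R',\ r \equiv a \pmod{p}\} \subseteq \{0, \ldots, p^{k-1} - 1\}\) for each \(a \in \bar{R}\). Then trivially \(|R'| = \sum_{a \in \bar{R}} |S_a| \leq |\bar{R}| \cdot p^{k-1}\), and the task reduces to showing \(|\bar{R}| \leq r_k(p)\), that is, that \(a - a'\) is not a \(k\)th power modulo \(p\) for distinct \(a, a' \in \bar{R}\).

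Suppose for contradiction that \(a - a' \equiv c^k \pmod{p}\) with \(c \not\equiv 0 \pmod{p}\), and choose any \(r, r' \in R'\) reducing to \(a, a'\). Then \(r - r' \equiv c^k \pmod{p}\) is a unit modulo \(p^k\), so Hensel's lemma applied to \(f(y) = y^k - (r - r')\) at \(y = c\) produces \(y \in \mathbb{Z}\) with \(y^k \equiv r - r' \pmod{p^k}\), contradicting the defining property of \(R'\). The derivative condition required by Hensel, \(f'(c) = k c^{k-1} \not\equiv 0 \pmod{p}\), holds precisely because \(p \nmid k\) and \(c \not\equiv 0 \pmod{p}\). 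The main obstacle in the proof is exactly this lifting step, which is where the hypothesis \(p \nmid k\) is essential: if \(p \mid k\), then \(k\)th power residues modulo \(p\) need not lift to \(k\)th power residues modulo \(p^k\), and the bound on \(|\bar{R}|\) can fail.
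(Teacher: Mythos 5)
Your proof is correct throughout, and the comparison with the paper is instructive. Part~(i) is essentially the paper's own construction: the set \(R' = \{r + ms : r \in R,\ 0\le s < m^{k-1}\}\) is exactly the set the paper describes via \(m\)-ary digits (\(u_0\in R\), higher digits free), and both proofs split into the case where the low digit differs (reduce mod \(m\)) and where it agrees (force \(m\mid y\) by squarefreeness).

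Part~(ii) is correct but uses a genuinely different decomposition, essentially dual to the paper's. The paper writes \(R' = \{up + r : (u,r)\in S\}\) and \emph{bounds the size of each fibre} over a fixed \(u_0\in\{0,\dots,p^{k-1}-1\}\) by \(r_k(p)\): within a fibre the two elements \(u_0p+r\), \(u_0p+r'\) have difference exactly \(r-r'\) with \(0<|r-r'|<p\), and Hensel lifts \(r-r'\) from a \(k\)th power residue mod \(p\) to one mod \(p^k\). You instead \emph{bound the number of nonempty fibres} of reduction modulo \(p\): you show the image \(\bar R = R'\bmod p\) itself has size at most \(r_k(p)\), taking \emph{any} lifts \(r,r'\in R'\) of a pair \(a,a'\in\bar R\) — here \(r-r'\) is no longer in \((-p,p)\), but the Hensel corollary only requires \(p\nmid (r-r')\), so it still applies. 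Both routes crucially use \(p\nmid k\) for the Hensel derivative condition, which you correctly flag. Your version is arguably a touch more conceptual, saying directly that reduction mod \(p\) sends \(R'\) into a set avoiding \(k\)th power differences mod \(p\); the paper's version keeps the working difference inside a single residue block, which gives tighter control that is not actually needed once one has the Hensel corollary in the stated generality.
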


\begin{proof}[Proof of Proposition \ref{mod2} (i)]
Let \(R \subseteq \{0,1,\dots,m-1\}\) witness \(r_k(m).\) Once again, consider the set of integers \(A\) with \(m\)-ary expansion \(\sum_{0\leq i<k}u_im^i,\) such that \(u_0\in R\) and the remaining \(u_i\in \{0,1 ,\dots, m-1\}\) are arbitrary. We may verify that this set has no differences equal to a non-zero \(k\)th power modulo \(m^k\) in a way similar to our previous arguments: explicitly, for elements \(u=\sum u_im^i\) and \(v=\sum v_im^i\) in \(A,\) suppose for contradiction that \(u-v\equiv x^k \not \equiv 0 \pmod {m^k}.\) 

This reduces to \(u_0-v_0 \equiv x^k \pmod m,\) which contradicts the definition of \(R\) if \(u_0\neq v_0.\) Thus \(u_0=v_0\) and so \(u-v= \sum_{i\geq 1} (u_i-v_i)m^i \equiv 0 \pmod m.\) Therefore \(x^k \equiv 0 \pmod m,\) which leads to \(x^k \equiv 0 \pmod {m^k},\) as \(m\) is square-free. This is a contradiction. Finally, by the definition of \(r_k(m^k),\) we have that \(r_k(m^k) \geq \vert A \vert = m^{k-1}r_k(m).\) 
\end{proof}

The inequality in part (i) can fail if \(m\) is not square-free. For example, \(r_2(16)=6\)  is strictly less than \(4r_2(4)=8\).  In order to prove part (ii) we utilise a method to `lift' solutions \cite[\S~2.6]{Intro}.

\begin{lemma}[Hensel's Lemma]
Let \(p\) be prime and \(f\in \mathbb{Z}[x].\) Suppose there exists \(a\in\mathbb{Z}\) such that \(f(a)\equiv 0 \pmod p\) and \(f'(a)\not\equiv 0 \pmod p.\) Then for every positive integer \(N\) there exists an integer \(a_N\equiv a \pmod p\) such that \(f(a_N)\equiv 0 \pmod {p^N}.\)
\end{lemma}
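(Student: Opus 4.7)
The plan is to proceed by induction on $N$, constructing the sequence $a_N$ one modulus at a time and using a Taylor expansion to control how $f$ behaves under small perturbations. The base case $N=1$ is trivial: take $a_1 := a$.

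For the inductive step, assume we have produced $a_N \in \mathbb{Z}$ with $a_N \equiv a \pmod p$ and $f(a_N) \equiv 0 \pmod {p^N}$. I would seek $a_{N+1}$ of the form $a_{N+1} := a_N + t p^N$ for some integer $t$ to be chosen. Expanding $f$ about $a_N$ (either by writing $f(x) = \sum c_i x^i$ and expanding each $(a_N + tp^N)^i$ by the binomial theorem, or by invoking the formal Taylor expansion of a polynomial), every term beyond the linear one picks up a factor of $p^{2N}$, so
\[
f(a_N + tp^N) \equiv f(a_N) + t p^N f'(a_N) \pmod{p^{2N}},
\]
and in particular modulo $p^{N+1}$. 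Writing $f(a_N) = p^N c$ for some integer $c$ (using the inductive hypothesis), the congruence $f(a_{N+1}) \equiv 0 \pmod{p^{N+1}}$ becomes
\[
c + t\, f'(a_N) \equiv 0 \pmod p.
\]

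The key observation, and the only place where the hypothesis $f'(a) \not\equiv 0 \pmod p$ is used, is that since $a_N \equiv a \pmod p$ we have $f'(a_N) \equiv f'(a) \pmod p$, so $f'(a_N)$ is invertible modulo $p$. Then we can solve uniquely for $t \pmod p$ by setting $t \equiv -c\, f'(a_N)^{-1} \pmod p$, and this choice of $a_{N+1}$ satisfies $f(a_{N+1}) \equiv 0 \pmod{p^{N+1}}$. Finally, $a_{N+1} = a_N + tp^N \equiv a_N \equiv a \pmod p$, so the induction closes.

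There is no real obstacle here; the only point to be careful about is verifying that the Taylor-type expansion for polynomials really does yield the congruence modulo $p^{2N}$ (rather than only $p^{N+1}$) without any analytic subtlety, but this is automatic since $f \in \mathbb{Z}[x]$ and one can expand term by term. Everything else is a direct induction.
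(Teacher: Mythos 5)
Your proof is correct and is the standard Hensel lifting argument. Note that the paper itself does not supply a proof of this lemma; it simply cites it to a textbook reference (Ireland and Rosen, \S~2.6). So there is nothing in the paper to compare against, but your induction is sound: the base case is immediate, the Taylor-type expansion of a polynomial over $\mathbb{Z}$ does indeed hold exactly with the quadratic-and-higher terms divisible by $p^{2N}$, the reduction to solving $c + t\,f'(a_N) \equiv 0 \pmod p$ is right, and the invertibility of $f'(a_N)$ follows from $a_N \equiv a \pmod p$ together with the hypothesis $f'(a)\not\equiv 0\pmod p$. One tiny stylistic point: you could equally well note that $f'(a_N)$ being a unit mod $p$ gives not just existence but uniqueness of $t$ mod $p$, which yields the stronger (and more commonly stated) form of Hensel's Lemma with a unique lift mod $p^N$; the paper only needs existence, so your version suffices.
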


\begin{corollary}\label{henscor}
Let \(w\in \mathbb{Z}.\) Suppose \(p\nmid k\) and \(p\nmid w.\) Then \(w\) is a non-zero \(k\)th power  modulo \(p^N\) for all \(N\) if and only if it is a non-zero \(k\)th power  modulo \(p.\)
\end{corollary}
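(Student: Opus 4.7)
The plan is to prove the two implications of the biconditional separately, with the reverse direction being a direct application of Hensel's Lemma as stated above. The forward direction is essentially definitional: if $w$ is a non-zero $k$th power modulo $p^N$ for every $N$, then specialising to $N=1$ yields $w \equiv y^k \pmod{p}$ for some $y$, and since $p \nmid w$ the residue $y$ must also be non-zero modulo $p$.

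For the reverse direction, suppose $w \equiv a^k \pmod{p}$ with $p \nmid w$, so in particular $p \nmid a$. I would apply Hensel's Lemma to the polynomial $f(x) := x^k - w \in \mathbb{Z}[x]$ at the approximate root $a$. We have $f(a) \equiv 0 \pmod{p}$ by hypothesis, and $f'(a) = ka^{k-1}$; since $p \nmid k$ and $p \nmid a$, we also have $f'(a) \not\equiv 0 \pmod{p}$. Hensel's Lemma therefore provides, for every positive integer $N$, an integer $a_N \equiv a \pmod{p}$ with $a_N^k \equiv w \pmod{p^N}$. Because $a_N \equiv a \not\equiv 0 \pmod{p}$, this $k$th power is genuinely non-zero modulo $p^N$ (as is $w$, again using $p \nmid w$), matching the required form.

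No real obstacle is anticipated: the two hypotheses of the corollary, $p \nmid k$ and $p \nmid w$, have been imposed precisely to ensure that $f'(a)$ is a unit modulo $p$ and that the lifted $k$th power representation at each level remains non-zero.
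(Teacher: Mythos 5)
Your proof is correct and follows exactly the same route as the paper's: the forward direction is trivial by specialising to $N=1$, and the reverse direction applies Hensel's Lemma to $f(x)=x^k-w$, with the hypotheses $p\nmid k$ and $p\nmid w$ ensuring $f'(a)=ka^{k-1}$ is a unit modulo $p$. The paper's proof is just a one-line version of yours.
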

\begin{proof}
The forward direction is immediate. For the reverse direction simply consider \(f(x)=x^k-w.\)
\end{proof}

\begin{proof}[Proof of Proposition \ref{mod2} (ii)]
Let the set $R'$ witness \(r_k(p^k)\). We can write this in the form \[R'=\left\{up+r:(u,r)\in S \right\},\] for some \(S \subseteq \{0,1,\dots, p^{k-1}-1\} \times \{0,1,\dots, p-1\}\).   We claim that we have the fibre bounds \(\vert\{(u_0,r)\in S\}\vert\leq r_k(p)\) for each \(u_0 \in \{0,1,\dots,p^{k-1}-1\},\) which would mean \(\vert R'\vert\leq r_k(p)p^{k-1}\) and prove the result. Suppose the opposite; then there must exist distinct \((u_0,r),\left(u_0,r'\right)\in S\) with \(r-r'\) a non-zero \(k\)th power modulo \(p.\) By Corollary \ref{henscor} we therefore deduce that \(r-r'\) is a non-zero \(k\)th power modulo \(p^k.\)
However, by the definition of \(R'\) we have that for all \(x,\)
\[(u_0p+r)-\left(u_0p+r'\right)\not\equiv x^k \pmod {p^k}.\]
That is, \begin{equation*}\label{contr}
r-r'\not\equiv x^k \pmod {p^k}.
\end{equation*} 
This is a contradiction. 
\end{proof}
\section{An Open Problem}\label{sectionopen}
In Theorem \ref{polygeneralised} we required conditions on the polynomial \(f.\) Can these conditions be relaxed? For instance, is there a way to handle polynomials with lower order terms than \(x^2\) in general, such as \(f(x)=x^2+x\) or \(f(x)=x^2+2x\)? The latter of these is equivalent to \(x^2-1\) by a linear shift \(x \mapsto x-1.\)
\bibliography{bib}
\bibliographystyle{amsalpha}
\end{document}